\documentclass{article}
\usepackage[utf8]{inputenc}


\usepackage{amsfonts}
\usepackage{amssymb}
\usepackage{amsmath}
\usepackage{amsthm}
\usepackage{mathtools}
\usepackage{cleveref}
\usepackage{url}
\usepackage{tikz}
\usepackage{tikz-cd}
\usepackage{booktabs}
\usepackage{float}
\usepackage{afterpage}
\usepackage{subcaption}
\usepackage{appendix}

\usepackage[
backend=biber,
style=alphabetic,
sorting=nyt,
doi=false,
url=false,
isbn=false,
]{biblatex}


\theoremstyle{plain}
    \newtheorem{theorem}{Theorem}
    \newtheorem{proposition}{Proposition}[section]
    \newtheorem{corollary}[proposition]{Corollary}

\theoremstyle{definition}
    \newtheorem{definition}[proposition]{Definition}
    \newtheorem{condition}[proposition]{Condition}
    
    \newtheorem{algorithm}[proposition]{Algorithm}

\theoremstyle{remark}
	\newtheorem{remark}[proposition]{Remark}%


\newcommand{\ZZ}{\mathbb{Z}}
\newcommand{\QQ}{\mathbb{Q}}
\newcommand{\RR}{\mathbb{R}}
\newcommand{\CC}{\mathbb{C}}
\newcommand{\FF}{\mathbb{F}}

\newcommand{\id}{\mathit{id}}

\newcommand{\htpy}{\simeq}

\renewcommand{\epsilon}{\varepsilon}

\DeclareMathOperator{\Hom}{Hom}

\newcommand{\Kh}{\mathit{Kh}}
\newcommand{\BN}{\mathit{BN}}

\newcommand{\ca}{\alpha}
\newcommand{\cb}{\beta}
\renewcommand{\a}{\mathbf{a}}
\renewcommand{\b}{\mathbf{b}}

\title{
    Fixing the functoriality of Khovanov homology: a simple approach
}
\author{Taketo Sano}

\addbibresource{bibliography.bib}

\begin{document}

    \maketitle
    
    \begin{abstract}
    Khovanov homology is functorial up to sign with respect to link cobordisms. The sign indeterminacy has been fixed by several authors, by extending the original theory both conceptually and algebraically. In this paper we propose an alternative approach: we stay in the classical setup and fix the functoriality by simply adjusting the signs of the morphisms associated to the Reidemeister moves and the Morse moves.
\end{abstract}

    
    \section{Introduction}\label{sec:intro}

Khovanov \cite{Khovanov:2000} introduced a link homology theory, now known as Khovanov homology, that categorifies the Jones polynomial. He conjectured that the theory is functorial up to sign with respect to isotopy classes of oriented link cobordisms. Jacobsson \cite{Jacobsson:2002} later proved that this is true, with the necessary restriction that the isotopy fixes the boundary links. Bar-Natan \cite{BarNatan:2004} introduced the theory of formal complex of tangle cobordisms, and proved that the functoriality up to sign also holds in his theory. Then Khovanov \cite{Khovanov:2004} proved that the functoriality up to sign holds in the universal Khovanov homology theory, from which the original theory and other deformations can be obtained. 

The sign indeterminacy has been fixed by several authors: Caprau \cite{Cap:2007}, Clack-Morrison-Walker \cite{CMW:2009}, Blanchet \cite{Blanc:2010}, Beliakova et al.\ \cite{Beliakova:2019} and Vogel \cite{Vog:2020}. All of these arguments require extending the original theory, both conceptually and algebraically: using foams or seamed cobordisms in place of simple cobordisms, and more sophisticated algebras that respect the extended setups. Here we propose an alternative approach: we stay in the classical setup, in particular rely on Bar-Natan's theory of formal complexes. 

\begin{theorem}\label{thm:main}
    By some adjustment of signs, the Bar-Natan's functor
    \[
        \BN: \mathit{Cob}^4(\emptyset) \rightarrow \mathit{Kob}(\emptyset),
    \]
    becomes invariant up to chain homotopy under isotopies of link cobordisms (rel boundary).
\end{theorem}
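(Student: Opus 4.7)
The plan is to invoke the Carter--Saito movie-move theorem: every link cobordism (rel boundary) is represented by a movie of planar diagrams whose consecutive frames differ by a Reidemeister move or a Morse move, and two movies represent isotopic cobordisms if and only if they are related by interchanges of distant moves together with the finite list of movie moves MM1--MM15. Bar-Natan has already shown that, for each such move, the chain maps on the two sides agree up to homotopy and a global sign $\epsilon \in \{\pm 1\}$. The goal is to kill these signs uniformly by assigning to each elementary move $m$ a correction $\eta(m) \in \{\pm 1\}$ depending only on local data, and redefining $\BN$ to send $m$ to $\eta(m)$ times Bar-Natan's original chain map.

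The first step is to tabulate, following Bar-Natan's case analysis, the discrepancy $\epsilon$ for each combinatorial instance of MM1--MM15. Interchanges of distant moves and the Reidemeister-only movie moves contribute no discrepancies; the substantive constraints come from the Morse-only moves (via saddle-composition rules) and, crucially, the mixed type-III moves in which a Reidemeister move is pushed past a Morse move---these are the source of the original sign indeterminacy.

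Next I would propose a local formula for $\eta$ using orientation-theoretic data available at the moment the move is performed: for a saddle, a sign determined by whether the two strands being resolved are co- or anti-oriented; for a birth/death, a sign determined by the orientation of the created/destroyed circle; for R1, R2, R3, signs determined by the signs of the crossings involved. Each such proposal yields a corrected functor $\BN'$, and it remains to verify that for every movie move the $\eta$-ratio across the two sides precisely equals Bar-Natan's $\epsilon$.

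The main obstacle is finding a local rule that resolves \emph{all} movie moves simultaneously. Abstractly, the family of discrepancies defines a $\ZZ/2$-valued $2$-cocycle on an appropriate groupoid of movies, and I must exhibit it as a coboundary of a $1$-cochain that factors through the local data of an elementary move; so the constraints imposed by different movie moves must be mutually compatible. Once the correct local ingredients are identified---guided by the orientation data that also appears in the foam-based sign refinements in the literature---the verification reduces to a finite, if tedious, diagrammatic check over each of MM1--MM15.
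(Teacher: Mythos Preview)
Your framework is right: Carter--Saito reduces the problem to the finite list of movie moves, Bar-Natan supplies $\BN(S)\htpy\epsilon\,\BN(S')$ with $\epsilon\in\{\pm1\}$, and the task is to exhibit a $1$-cochain $\eta$ on elementary moves whose coboundary kills the discrepancy $2$-cocycle. But what you have written is a plan, not a proof. You neither produce the rule $\eta$ nor verify it, and you yourself identify ``finding a local rule that resolves all movie moves simultaneously'' as the main obstacle; that obstacle is the entire content of the theorem. (Incidentally, your claim that the Reidemeister-only movie moves contribute no discrepancies is not something you can take for granted in the unadjusted theory.)

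The paper supplies exactly the missing idea, and it is of a different nature from the orientation bookkeeping you sketch. Instead of tabulating Bar-Natan's $\epsilon$'s and matching them against a guessed $\eta$, one passes to a specific Khovanov-type theory with $(R,h,t)=(\ZZ,1,0)$, so that $c=1$ is a unit and the homology $H_{1,0}(D;\ZZ)$ is freely generated by the Lee-type canonical classes $[\ca(D)],[\cb(D)]$. The correction $\eta(m)$ for each elementary move $m$ is then \emph{defined} by the requirement that the adjusted map send $[\ca(D)]$ to $+\,c^{j}[\ca(D')]+\cdots$ (\Cref{lem:adj-rho,lem:adj-phi}); this is computable from the local picture. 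With this choice, for any movie move both $\phi_S$ and $\phi_{S'}$ send $[\ca(D)]$ to a class whose $[\ca(D')]$-coefficient is $+1$ (\Cref{lem:cob-no-closed-comp}). Since the canonical classes form a basis, the relation $\phi_S=\epsilon\,\phi_{S'}$ forces $\epsilon=1$. No tabulation of Bar-Natan's signs, and no case-by-case verification across MM1--MM15, is ever performed.

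In short, the canonical classes serve as a universal sign detector that simultaneously \emph{defines} the correction and \emph{certifies} its correctness. Your cohomological formulation is fine as a description of the problem, but without this detector you are left with a large combinatorial search that you have not carried out.
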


Definition of the functor $\BN$ will be given in \Cref{sec:preliminary}. It is proved in \cite{Khovanov:2004} that any Khovanov-type homology theory can be obtained from the \textit{universal theory}, also called the $U(2)$-equivariant theory, $H_{h, t}(-; \ZZ[h, t])$. $H_{h, t}(-; \ZZ[h, t])$ factors as $H \circ \mathcal{F}_{h, t} \circ \BN$, where $\mathcal{F}_{h, t}$ is the corresponding TQFT and $H$ is the homology functor. Thus we have

\begin{corollary}
    By some adjustment of signs, any Khovanov-type homology theory that can be obtained from a base change of the universal theory becomes strictly functorial with respect to link cobordisms. This includes the original theory \cite{Khovanov:2000}, Lee's theory \cite{Lee:2005}, and Bar-Natan's theory \cite{BarNatan:2004}.
\end{corollary}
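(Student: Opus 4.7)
The plan is to introduce, for each elementary move used in Bar-Natan's construction (the three Reidemeister moves R1, R2, R3, and the three Morse moves: births, deaths, and saddles), a sign $\sigma \in \{\pm 1\}$ depending only on locally extractable combinatorial data of the move (such as orientations of the strands involved, or the sign of a crossing being inserted or removed). Bar-Natan's original chain map associated to such a move is then replaced by $\sigma$ times that map. Since any link cobordism can be presented by a movie of such moves, this defines a modified assignment $\widetilde{\BN}$, and the claim is that a suitable choice of sign rule makes $\widetilde{\BN}$ respect isotopy rel boundary up to chain homotopy.

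By the Carter--Saito theorem, two movies represent isotopic cobordisms if and only if they are related by a finite sequence of the fifteen standard movie moves (together with planar isotopies and ``flips''). Thus the verification reduces to checking, for each movie move, that the sign-corrected chain maps on the two sides agree up to chain homotopy. I would proceed by tabulating, for each movie move, the sign discrepancy between the two sides under $\BN$: certain moves are already strictly functorial in $\BN$, while others---the ``sign-ambiguous'' ones, notably MM6, MM8, and certain oriented variants of MM9 and MM10---produce a $-1$. The constraint on the sign rule is then, for each movie move, that the product of $\sigma$'s along one side matches the product along the other, modulo the known discrepancy.

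The main obstacle I anticipate is exhibiting a single sign rule that simultaneously solves all of these interlocking constraints. A choice depending only on the ``type'' of move will not suffice, since the discrepancies depend on finer data such as strand orientations and crossing signs; conversely, a choice depending on too much global data would not behave well under composition of cobordisms. The technical heart of the proof will be identifying the correct local data on which to base $\sigma$ and verifying case-by-case that the rule satisfies all movie move constraints. I would look for an organizing principle---for instance, signs reflecting an orientation parity, the sign of an affected crossing, or a local framing twist count---so that the verification becomes systematic rather than purely ad hoc, with the hope that once the rule is chosen, the bulk of the remaining work is a bookkeeping exercise over the fifteen movie moves.
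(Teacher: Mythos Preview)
Your overall framework is correct---adjust $\BN$ on each elementary move by a sign, then verify the movie moves---and you have honestly flagged the crux: exhibiting a sign rule and checking it against all fifteen movie moves. But this is precisely the obstacle the paper bypasses, and the organizing principles you suggest (orientation parity, crossing sign, framing twist) are not the one that works.

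The paper's device is Lee's canonical classes. In any theory $H_{h,t}(-;R)$ with $c=\sqrt{h^2+4t}\in R^\times$ (e.g.\ $(R,h,t)=(\ZZ,1,0)$), each diagram $D$ carries a distinguished class $[\alpha(D)]$, and one computes directly that for each elementary cobordism $S$ the induced map sends $[\alpha(D)]$ to $\epsilon\,c^j[\alpha(D')]$ (plus other canonical classes, present only for cups), where $j$ is determined by the change in writhe, Seifert-circle count, and $\chi(S)$, and $\epsilon\in\{\pm1\}$. The sign adjustment is then \emph{defined} by replacing $\BN(S)$ with $\epsilon\,\BN(S)$, forcing the coefficient of $[\alpha(D')]$ to be positive. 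No tabulation of movie-move discrepancies is needed to specify the rule.

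Functoriality is then verified uniformly, not move by move. For any movie move $S\sim S'$ (neither has closed components), Bar-Natan already gives $\BN(S)\simeq\pm\BN(S')$; after the adjustment, applying $H\circ\mathcal{F}_{1,0}$ yields maps $\phi_S,\phi_{S'}$ which both send $[\alpha(D)]$ to $c^j[\alpha(D')]+\cdots$ with the \emph{same} $j$ (it depends only on the boundary diagrams and $\chi$). Since the canonical classes form a basis of $H_{1,0}$, the ambiguous sign is forced to be $+1$. So the canonical class is exactly the organizing principle you were looking for, and it replaces your anticipated fifteen-case bookkeeping by a single argument. The residual case analysis in the paper is only in establishing how the elementary maps act on $[\alpha(D)]$ (notably for R3), not in checking movie moves.
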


The key to proving \Cref{thm:main} is essentially given in our previous paper \cite{Sano:2020}, where we studied the behavior of the canonical classes under Reidemeister moves and cobordisms. As a byproduct, we obtain an isotopy invariant of a closed orientable surface $S \subset \RR^4$ that generalizes the \textit{Khovanov-Jacobsson number} $n_S = |H_{0, 0}(S)| \in \ZZ$ and Tanaka's invariant $|H_{0, t}(S)| \in \ZZ[t]$. 

\begin{theorem} \label{thm:closed-surf-inv}
    Let $H_{h, t}$ denote the universal Khovanov homology theory. For a connected orientable closed surface $S \in \RR^4$, the cobordism map gives an isotopy invariant
    \[
        H_{h, t}(S) \in \Hom(H_{h, t}(\emptyset), H_{h, t}(\emptyset)) = \ZZ[h, t]
    \]
    which equals
    \[
    \begin{cases}
        2 (h^2 + 4t)^\frac{g(S) - 1}{2} & \text{if $g(S)$ is odd,}\\
        0 & \text{if $g(S)$ is even.}
    \end{cases}
    \]
\end{theorem}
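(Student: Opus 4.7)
The plan is to leverage \Cref{thm:main}: once the sign indeterminacy is eliminated, $H_{h,t}(S)$ is a genuine isotopy invariant, so one may choose any convenient Morse decomposition of $S$ viewed as a cobordism $\emptyset \to \emptyset$ and compute the corresponding composition in the induced TQFT $\mathcal{F}_{h,t}$. A convenient choice presents a connected closed orientable surface of genus $g$ as
\[
    \emptyset \xrightarrow{\text{birth}} U \xrightarrow{\phi} U \xrightarrow{\phi} \cdots \xrightarrow{\phi} U \xrightarrow{\text{death}} \emptyset,
\]
where $U$ is the unknot and $\phi = m \circ \Delta$ is the ``handle'' endomorphism, applied exactly $g$ times.

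Next I would read off the relevant formulas from the universal Frobenius algebra $A = \ZZ[h,t][X]/(X^2 - hX - t)$: unit $1$, counit $\epsilon(1) = 0$, $\epsilon(X) = 1$, multiplication $X^2 = hX + t$, and (from duality with respect to $\epsilon$) comultiplication
\[
    \Delta(1) = 1 \otimes X + X \otimes 1 - h(1 \otimes 1), \qquad \Delta(X) = X \otimes X + t(1 \otimes 1).
\]
A direct computation yields $\phi(1) = 2X - h$ and $\phi(X) = hX + 2t$, and in particular the key identity
\[
    \phi^2(1) = 2(hX + 2t) - h(2X - h) = h^2 + 4t.
\]

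By an immediate induction, $\phi^{2k}(1) = (h^2 + 4t)^k$ and $\phi^{2k+1}(1) = (h^2 + 4t)^k(2X - h)$. Composing with $\epsilon$ annihilates the constant term and extracts the coefficient of $X$, producing $0$ in the even-genus case and $2(h^2 + 4t)^{(g-1)/2}$ in the odd-genus case, which is exactly the stated formula.

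The main obstacle is really \Cref{thm:main} itself, which supplies the required isotopy invariance; once that is granted, what remains is a short check that the sign-corrected Bar-Natan functor evaluates the three elementary cobordisms used here (birth, handle, death) by the usual TQFT formulas above, so that no extraneous signs enter the iteration. This compatibility should follow directly from the sign conventions fixed in the proof of \Cref{thm:main}.
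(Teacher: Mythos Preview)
Your proof is correct and takes a somewhat different route from the paper's. Both arguments invoke \Cref{thm:main} to reduce to a convenient representative of $S$ and then decompose it as $\text{cup}\cup S'\cup\text{cap}$ between crossingless unknots. The paper handles the middle piece $S'$ in full generality: it passes to the ring extension $R_2=\ZZ[h,t][u,v,c^{-1}]$, applies \Cref{lem:cob-no-closed-comp} to track the canonical classes $[\ca],[\cb]$ through $S'$ (picking up factors $(\pm c)^{g}$), and then substitutes $c^2=h^2+4t$ to land back in $\ZZ[h,t]$. You instead fix the specific movie $S'=(\text{handle})^g$ and compute the handle operator $\phi=m\circ\Delta$ directly in $A_{h,t}$ over $\ZZ[h,t]$ by the recursion $\phi^2(1)=h^2+4t$. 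Your route is more elementary---no ring extension and no canonical-class bookkeeping---while the paper's route showcases the machinery of \Cref{sec:main}. The check you flag at the end (that the sign-adjusted maps agree with the usual $\iota,\epsilon,m,\Delta$ on this crossingless movie) is indeed immediate from the convention in \Cref{lem:adj-phi}: orienting $U$ so that $\ca(U)=\a$ and choosing the split to produce two unnested circles, the coefficient of $[\ca(D')]$ is $c^{-1},1,c,1$ at the four kinds of steps, so no sign is ever inserted.
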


The paper is organized as follows. In \Cref{sec:preliminary} we review the basics of Khovanov homology theory in its generalized form, and also Bar-Natan's theory of formal complexes. In \Cref{sec:main} we give the explicit adjustments for the cobordism maps. Finally in \Cref{sec:proof}, we prove the two main theorems, using the results obtained in the previous section. 
    \section{Preliminaries} \label{sec:preliminary}

In this section, we review some basics of Khovanov homology theory, and cite some results from \cite{Sano:2020} that will be needed in the coming sections. Throughout this paper, we will work in the smooth category.

\begin{definition}
    For any unary function $f$, define its \textit{difference function} $\delta f$ by
    \[
        \delta f(x, y) := f(y) - f(x).
    \]
\end{definition}

\begin{definition}
    For an oriented link diagram $D$, let $w(D)$ denote the \textit{writhe} of $D$, $r(D)$ denote the number of Seifert circles of $D$, and $|D|$ denote the number of components of $D$.
\end{definition}

\begin{definition}
    Let $R$ be a commutative ring with unity. A \textit{Frobenius algebra} over $R$ is a quintuple $(A, m, \iota, \Delta, \epsilon)$ such that: 
    \begin{enumerate}
        \item $(A, m, \iota)$ is an associative $R$-algebra with multiplication $m$ and unit $\iota$,
        \item $(A, \Delta, \epsilon)$ is a coassociative $R$-coalgebra with comultiplication $\Delta$ and counit $\epsilon$,
        \item the Frobenius relation holds: 
        \[
            \Delta \circ m = (\id \otimes m) \circ (\Delta \otimes \id) = (m \otimes \id) \circ (\id \otimes \Delta).
        \]
    \end{enumerate}
\end{definition}

\begin{definition}
    For any $h, t \in R$, let $A_{h, t} = R[X]/(X^2 - hX - t)$. The $R$-algebra $A_{h, t}$ is given a Frobenius algebra structure as follows: the counit $\epsilon: A_{h, t} \rightarrow R$ is defined by
    \[
        \epsilon(1) = 0,\quad
        \epsilon(X) = 1.
    \]
    Then the comultiplication $\Delta$ is uniquely determined so that $(A_{h, t}, m, \iota, \Delta, \epsilon)$ becomes a Frobenius algebra. Explicitly, $m$ and $\Delta$ are given by 
    \begin{equation*}
    \begin{gathered}
        m(1 \otimes 1) = 1, \quad 
        m(X \otimes 1) = m(1 \otimes X) = X, \quad
    	m(X \otimes X) = hX + t, \\
        \Delta(1) = X \otimes 1 + 1 \otimes X - h (1 \otimes 1), \quad 
    	\Delta(X) = X \otimes X + t (1 \otimes 1).
    \end{gathered}
    \end{equation*}
\end{definition}

\begin{definition}
    Suppose $h, t \in R$. Given a link diagram $D$ with $n$ crossings, the complex $C_{h, t}(D; R)$ is defined by the construction given in \cite{Khovanov:2000}, except that the defining Frobenius algebra $A = R[X]/(X^2)$ is replaced by $A_{h, t} = R[X]/(X^2 - hX - t)$. Let $H_{h, t}(D; R)$ denote the homology of $C_{h, t}(D; R)$.
\end{definition}

\begin{remark}
    When $R$ is graded (possibly trivially) and $h, t \in R$ are given appropriate degrees, then $C_{h, t}(D; R)$ and $H_{h, t}(D; R)$ admits a direct sum decomposition or a filtration with respect to the \textit{quantum grading}. In this paper, we are only concerned about signs, so we regard $H_{h, t}(D; R)$ merely as an $R$-module.
\end{remark}

$H_{0, 0}(-; \ZZ)$ is Khovanov's original theory \cite{Khovanov:2000}, $H_{0, 1}(-; \QQ)$ is Lee's theory \cite{Lee:2005}, and $H_{h, 0}(-; \FF_2[h])$ is Bar-Natan's theory \cite{BarNatan:2004}. $H_{h, t}(-; \ZZ[h, t])$ is Khovanov's \textit{universal theory} introduced in \cite{Khovanov:2004} (where it is denoted $\mathcal{F}_5$), from which any rank 2 Frobenius algebra based link homology theory can be obtained. 
In contrast to Khovanov's theory, Lee's theory has an amazing property that, for any link diagram $D$, its $\QQ$-Lee homology has dimension $2^{|D|}$ with specific generators, called the \textit{canonical generators}, that are constructed explicitly from $D$. The construction of the canonical generators can be generalized for $C_{h, t}$ provided that the following condition holds.

\begin{condition} \label{cond:ab-cond1}
	The quadratic polynomial $X^2 - hX - t \in R[X]$ factors as $(X - u)(X - v) \in R[X]$ for some $u, v \in R$. 
\end{condition}

Assuming that \Cref{cond:ab-cond1} holds, put $c = v - u$. Obviously $c$ is a square root of $h^2 + 4t$. Note that $c = 0$ for Khovanov's theory, $c = 2$ for Lee's theory, and $c = h$ for Bar-Natan's theory. Define two elements in $A$ by
\[
    \a = X - u,\quad 
    \b = X - v
\]
We call $\a$ and $\b$ \textit{colors}. The multiplication $m$ and comultiplication $\Delta$ of $A_{h, t}$ diagonalizes as: 
\begin{alignat*}{2}
	m(\a \otimes \a) &= c\a, 
	    &\hspace{1cm} 
	     \Delta(\a) &= \a \otimes \a, \\
	m(\a \otimes \b) &= 0, 
	    &\Delta(\b) &= \b \otimes \b \\
	m(\b \otimes \a) &= 0 &&\\
	m(\b \otimes \b) &= -c\b &&
\end{alignat*}
Given an oriented link diagram $D$, we color its Seifert circles by $\a, \b$ according to the following algorithm:

\begin{algorithm} \label{algo:ab-coloring}
    Given a link diagram $D$, color each of its Seifert circles by $\a$ or $\b$ according to the following algorithm: separate $\mathbb{R}^2$ into regions by the Seifert circles of $D$, and color the regions in the checkerboard fashion (with the unbounded region colored white). For each Seifert circle, let it inherit the orientation from $D$, and color $\a$ if it sees a black region to the left, otherwise color $\b$ (see \Cref{fig:ab}).
\end{algorithm}

\begin{figure}[t]
	\centering
    \includegraphics[scale=0.35]{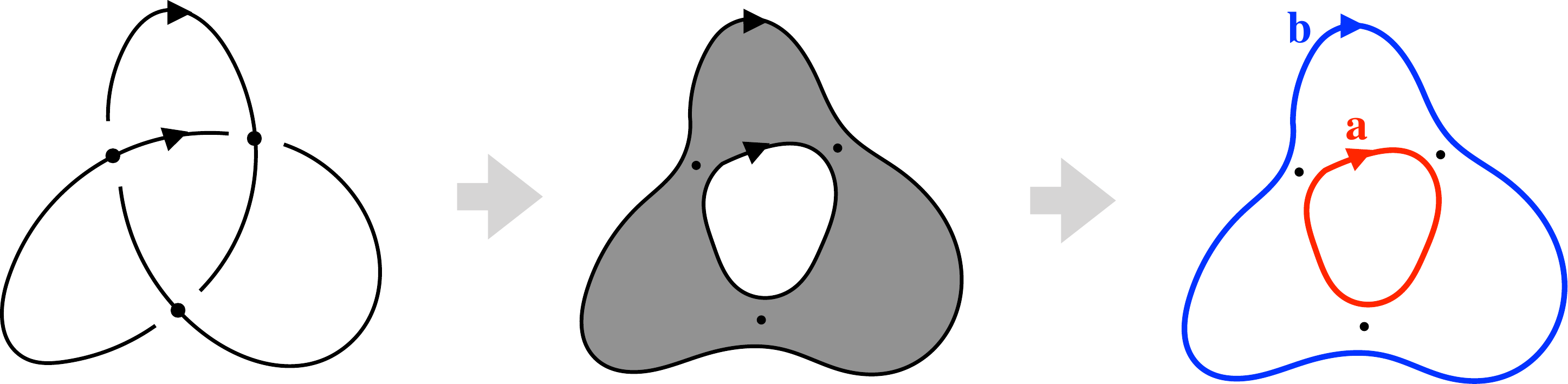}
	\caption{Coloring the Seifert circles by $\a$, $\b$.}
    \label{fig:ab}
\end{figure}

This coloring determines an element $\ca(D) \in C_{h, t}(D; R)$ by the corresponding tensor product of $\a$ and $\b$. On the underlying unoriented diagram of $D$, there are $2^{|D|}$ possible orientations, and for each such orientation $o$, we can apply the same algorithm to obtain an element $\ca(D, o) \in C_{h, t}(D; R)$. It is easily seen that these elements are cycles in $C_{h, t}(D; R)$. These cycles are called the \textit{canonical cycles} of $D$, and those homology classes the \textit{canonical classes} of $D$. In particular, we name the following two cycles for later use. 

\begin{definition}[$\ca$-, $\cb$-cycles]
    Let $o$ be the given orientation of $D$. We define
    \begin{align*}
        \ca(D) &:= \ca(D, o)\\
        \cb(D) &:= \ca(D, -o)
    \end{align*}
    and call them the \textit{$\ca$-cycle, $\cb$-cycle} of $D$. 
\end{definition}

The following proposition is a generalization of \cite[Theorem 4.2]{Lee:2005}, which is proved in \cite[Theorem 4.2]{Turner:2020} and in \cite[Proposition 2.9]{Sano:2020}. 

\begin{proposition} \label{prop:ab-gen}
	If $c = \sqrt{h^2 + 4t}$ is invertible in $R$, then $H_{h, t}(D; R)$ is freely generated over $R$ by the canonical classes. In particular $H_{h, t}(D; R)$ has rank $2^{|D|}$.
\end{proposition}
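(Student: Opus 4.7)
The plan is to exploit the semisimplicity that invertibility of $c$ provides. Since $\a - \b = c$ is a unit in $R$, the set $\{\a, \b\}$ is an $R$-basis of $A_{h,t}$, and the diagonalized formulas for $m$ and $\Delta$ displayed above show that the rescaled elements $e_+ = c^{-1}\a$ and $e_- = -c^{-1}\b$ are orthogonal idempotents summing to $1$. This gives a direct-product decomposition $A_{h,t} \isom R\,e_+ \oplus R\,e_-$ as Frobenius algebras, in which both $m$ and $\Delta$ block-diagonalize with no cross terms.

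Using this, I would decompose the Khovanov complex coloring-by-coloring. For each resolution $\alpha$ of $D$ with $|\alpha|$ circles, the module $A_{h,t}^{\otimes |\alpha|}$ splits into $2^{|\alpha|}$ rank-one summands indexed by $\{\a, \b\}$-colorings of the circles, and the local diagonality of $m$ and $\Delta$ makes the cube differentials diagonal across this refinement: an elementary merge kills cross-colored inputs and scales same-colored ones by $\pm c$, while an elementary split duplicates a color up to a scalar $\pm 1$. Grouping colorings across resolutions by the equivalence relation generated by these nonzero transitions, one obtains a decomposition
\[
    C_{h,t}(D; R) = \bigoplus_{o} C^o(D; R),
\]
indexed by the $2^{|D|}$ orientations $o$ of $|D|$, because the equivalence classes of circles across the cube of resolutions correspond to the components of $|D|$, and a $\{\a, \b\}$-coloring of each component amounts to an orientation.

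It then remains to show that each sub-complex $C^o$ has homology $R$ generated by $[\ca(D, o)]$. By construction the canonical cycle $\ca(D, o)$, already known to be a cycle, lives in $C^o$ at the Seifert resolution of $o$. Since every edge map within $C^o$ is multiplication by an invertible scalar ($\pm c$ or $\pm 1$), an iterated Gaussian-elimination (or acyclic-matching) argument contracts $C^o$ to a single copy of $R$ in the homological degree of the Seifert resolution, with $\ca(D, o)$ representing its generator. Summing over $o$ gives the free $R$-module of rank $2^{|D|}$ with the canonical classes as a basis.

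The main obstacle is the combinatorial bookkeeping: checking that the equivalence classes of circles across the full cube of resolutions partition into exactly the components of $|D|$ (neither more nor fewer), and that the contraction of each $C^o$ really leaves $\ca(D, o)$ as the surviving class rather than some scalar multiple or a different representative. This is where one adapts Lee's original skein-theoretic argument (for $h = 0$, $t = 1$ over $\QQ$) to the universal $A_{h,t}$ setting, taking care with signs and with the fact that several of the intermediate scalars involve $c$ rather than $\pm 1$.
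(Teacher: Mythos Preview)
The paper does not supply a proof here; it cites \cite{Turner:2020} and the author's earlier \cite{Sano:2020}. Your overall strategy---split $A_{h,t}$ via the orthogonal idempotents $c^{-1}\a$ and $-c^{-1}\b$, decompose the cube complex by colorings, and simplify---is exactly Lee's, and is what those references carry out.

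The execution of your decomposition step is wrong, though. The claim that ``the equivalence classes of circles across the cube of resolutions correspond to the components of $|D|$'' is false: for any non-split diagram every circle at every resolution is linked to every other through a chain of merges and splits, so there is a \emph{single} class of circles however many components $D$ has (try the Hopf link). Consequently the partition of (resolution, coloring) pairs by nonzero transitions does not have $2^{|D|}$ blocks. Already for a one-crossing kink there are four blocks---$\{\a\otimes\a,\ \a\}$, $\{\b\otimes\b,\ \b\}$, $\{\a\otimes\b\}$, $\{\b\otimes\a\}$---not two; for the Hopf link there are six, not four. The correct picture is that exactly $2^{|D|}$ of the blocks are singletons, namely the canonical cycles $\ca(D,o)$ sitting at the oriented resolution for $o$, and each contributes a free copy of $R$; all remaining blocks are acyclic. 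Identifying the singleton blocks with orientations is straightforward once stated correctly (one checks that at the Seifert state every crossing lies between two differently-colored circles), but proving the acyclicity of the non-singleton blocks is the substantive step. Knowing that every nonzero edge is multiplication by a unit only tells you each block has free homology after Gaussian elimination; it does not by itself force the rank to be zero. This is where the inductive argument in Lee's paper, or an equivalent simplification, actually does the work that your sketch defers to ``bookkeeping''.
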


\begin{remark}
    The universal theory satisfying the condition of \Cref{prop:ab-gen} is the $U(1) \times U(1)$-equivariant theory $H_{\alpha \mathcal{D}}$ \cite{Khovanov:2020}, defined over $R = \ZZ[u, v, c^{-1}]$. 
\end{remark}

\begin{remark}
    The similar statement also holds for Caprau's universal $\mathfrak{sl}(2)$ cohomology over $\CC$ \cite[Theorem 4]{Cap:2009}.
\end{remark}

\begin{remark}
    When $c$ is \textit{not} invertible in $R$, for example for $\ZZ$-Lee theory given by $(R, h, t) = (\ZZ, 0, 1)$, the $\ca$-classes do not necessary generate $H_{h, t}(D; R)$. See \cite{Sano:2020} for details.
\end{remark}

Finally we introduce categories and functors defined by Bar-Natan in \cite{BarNatan:2004} and cite the related theorems.

\begin{definition} The categories $\mathit{Cob}^4(\emptyset)$, $\mathit{Kob}(\emptyset)$ and the functor $\BN$ are defined as follows:
    \begin{itemize}
        \item $\mathit{Cob}^4(\emptyset)$ is the category with objects oriented link diagrams and morphisms 2-dimensional oriented cobordisms between link diagrams generically embedded in $\RR^3 \times [0, 1]$.
        \item $\mathit{Kob}(\emptyset)$ is the category of chain complexes over the additive category $\mathit{Mat}(\mathit{Cob}^3_{/l}(\emptyset))$, where $\mathit{Mat}(\mathit{Cob}^3_{/l}(\emptyset))$ is the category with objects formal direct sums of smoothings of link diagrams and with morphisms formal matrices of cobordisms between such smoothings modulo local relations.
        \item The functor 
            \[
                \BN: \mathit{Cob}^4(\emptyset) \rightarrow \mathit{Kob}(\emptyset)
            \] 
            is defined so that $H_{h, t}(-; R)$ factors as
            \[
                H_{h, t}(-; R): 
                \mathit{Cob}^4(\emptyset) 
                \xrightarrow {\BN}
                \mathit{Kob}(\emptyset) 
                \xrightarrow {\mathcal{F}_{h, t}}
                \mathit{Kom}(R\mathit{Mod})
                \xrightarrow {H}
                R\mathit{Mod}
            \]
            where
            \begin{itemize}
                \item $R\mathit{Mod}$ is the category $R$-modules and $R$-module homomorphisms,
                \item $\mathit{Kom}(R\mathit{Mod})$ is the category of chain complexes over $R\mathit{Mod}$,
                \item $\mathcal{F}_{h, t}$ is the functor (TQFT) corresponding to the Frobenius algebra $A_{h, t}$, and
                \item $H$ is the homology functor.
            \end{itemize}
    \end{itemize}
\end{definition}

A note on the functor $\BN$. For a generic cobordism between link diagrams $S: D \rightarrow D'$ (i.e.\ a morphism in $\mathit{Cob}^4(\emptyset)$), the morphism $\BN(S)$ is defined in the same way as in the original Khovanov's theory. First $\BN(T)$ is defined for each elementary cobordism $T$, which corresponds to a Reidemeister move or a Morse move. Then $\BN(S)$ is defined by decomposing $S$ into elementary cobordisms $T_i$ and composing the corresponding morphisms $\BN(T_i)$. See \cite[Section 8.1]{BarNatan:2004} for the precise definition, where our $\BN$ is denoted $\Kh$. 

\begin{proposition}[{\cite[Theorem 4]{BarNatan:2004}}]
\label{thm:BN}
    The functor $\BN$ is invariant up to chain homotopy and sign under isotopies of link cobordisms (rel boundary).
\end{proposition}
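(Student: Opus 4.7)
The plan is to reduce the statement to a finite check based on the Carter--Saito movie move theorem, which classifies smooth isotopies of generically embedded link cobordisms. Any morphism $S: D \rightarrow D'$ in $\mathit{Cob}^4(\emptyset)$ can be presented as a \emph{movie}, i.e.\ a sequence of link diagrams connected by elementary local moves: the three Reidemeister moves and the three Morse moves (birth, death, saddle). Two such movies represent ambient isotopic cobordisms (rel boundary) if and only if they are related by a finite sequence of 15 elementary \emph{movie moves}, together with their variants obtained by reversing time, reflecting in space, or exchanging over- and under-strands. Since $\BN$ is defined on elementary cobordisms and extends by composition, it suffices to verify that for each movie move the two chain maps produced in $\mathit{Kob}(\emptyset)$ agree up to chain homotopy and sign.

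First I would confirm that $\BN$ descends to individual equivalence classes of cobordisms, which reduces to invariance under Morse cancellations and to the existence of chain homotopy equivalences realizing each Reidemeister move; both are already established in \cite{BarNatan:2004}. Then I would group the fifteen movie moves by the nature of the interaction they encode. The first group consists of moves in which two elementary moves occur in disjoint parts of the diagram and can be performed in either order; for these, the naturality of the induced morphisms gives the equality on the nose. The second group consists of moves in which a Reidemeister move is followed by its inverse (or a birth/death pair occurs); here the equality reduces to the fact that the Reidemeister chain maps are mutually inverse up to chain homotopy, which can be read off from the explicit formulas in \cite{BarNatan:2004}.

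The substantive work lies in the remaining group, whose central representative is the triple-point (tetrahedron) move, which encodes two different ways to pass one strand across the crossing of two others. For each such move I would write both chain maps as explicit matrices of cobordisms between the smoothings of the relevant multi-crossing tangle and then produce a chain homotopy between the two matrices. Bar-Natan's delooping and Gauss-elimination lemmas from \cite{BarNatan:2004} substantially reduce the tangle complexes and make such a chain homotopy feasible to exhibit.

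The main obstacle is precisely this last group: the two candidate chain maps differ by more than any manifest local identity, and the chain homotopy between them can only be produced after a careful simplification of each of the tangle complexes involved. Moreover it is exactly at these moves that the unavoidable sign ambiguity appears, because the chain homotopy one constructs is only well-defined up to a global $\pm 1$. Pinning down and correcting this residual sign is the subject of the present paper's \Cref{thm:main}.
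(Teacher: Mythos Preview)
The paper does not give its own proof of this proposition: it is stated as a citation of \cite[Theorem~4]{BarNatan:2004} and used as a black box, so there is nothing in the present paper to compare your sketch against. Your outline is a reasonable summary of Bar-Natan's original argument (reduction to Carter--Saito movie moves, handling the ``distant'' and ``do--undo'' moves easily, and then doing explicit computations for the remaining ones), though a couple of details are slightly off: the delooping and Gaussian elimination lemmas you invoke are not actually in \cite{BarNatan:2004} but in Bar-Natan's later work, and the sign ambiguity is not that ``the chain homotopy is only well-defined up to $\pm 1$'' but rather that for certain movie moves the two chain maps themselves can only be shown to agree up to an overall sign.
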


\begin{corollary}
    Let $S: D \rightarrow D'$ be a generic cobordism between link diagrams. For any commutative ring $R$ and $h, t \in R$, the corresponding cobordism map
    \[
        H_{h, t}(S; R) : H_{h, t}(D; R) \rightarrow H_{h, t}(D'; R)
    \]
    is invariant up to sign under isotopies of $S$ (rel boundary).
\end{corollary}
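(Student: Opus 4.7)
The plan is to derive this corollary directly from the preceding proposition by following the three-step factorization
\[
    H_{h,t}(-; R) : \mathit{Cob}^4(\emptyset) \xrightarrow{\BN} \mathit{Kob}(\emptyset) \xrightarrow{\mathcal{F}_{h,t}} \mathit{Kom}(R\mathit{Mod}) \xrightarrow{H} R\mathit{Mod}
\]
and tracking what happens to the sign-and-homotopy indeterminacy at each stage.

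First, suppose $S, S' : D \to D'$ are two generic cobordisms that are isotopic rel boundary. By \Cref{thm:BN}, there exists a sign $\epsilon \in \{\pm 1\}$ and a chain homotopy $\BN(S) \simeq \epsilon \cdot \BN(S')$ in $\mathit{Kob}(\emptyset)$. Next, I would apply the TQFT functor $\mathcal{F}_{h,t}$. The point here is that $\mathcal{F}_{h,t}$ is an additive functor between additive categories (it is defined on $\mathit{Mat}(\mathit{Cob}^3_{/l}(\emptyset))$ by sending a smoothing to a tensor product of copies of $A_{h,t}$ and extending formally, and it respects the local relations because $A_{h,t}$ is a Frobenius algebra of the appropriate type). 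Additivity implies that $\mathcal{F}_{h,t}$ commutes with the sign $\epsilon$ and sends chain homotopies to chain homotopies, yielding
\[
    \mathcal{F}_{h,t}(\BN(S)) \simeq \epsilon \cdot \mathcal{F}_{h,t}(\BN(S'))
\]
as chain maps in $\mathit{Kom}(R\mathit{Mod})$.

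Finally, I would apply the homology functor $H$. Since homotopic chain maps induce equal maps on homology, the homotopy collapses, leaving only the sign:
\[
    H_{h,t}(S; R) = \epsilon \cdot H_{h,t}(S'; R),
\]
which is exactly the claim.

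I expect no real obstacle: the argument is a straightforward unwinding of the factorization together with the two standard facts that additive functors preserve sign and chain homotopy, and that the homology functor trivializes chain homotopy. The only point that merits a one-line mention is why $\mathcal{F}_{h,t}$ descends from $\mathit{Kob}(\emptyset)$ at all, i.e.\ that it respects Bar-Natan's local relations — but this is exactly what makes the diagram in the definition of $\BN$ well defined, so it may be invoked as given.
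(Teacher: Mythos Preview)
Your argument is correct and is exactly the intended derivation: the paper states this corollary without proof immediately after \Cref{thm:BN}, relying implicitly on the factorization $H_{h,t}(-;R) = H \circ \mathcal{F}_{h,t} \circ \BN$ together with the facts that $\mathcal{F}_{h,t}$ preserves chain homotopy and sign and that $H$ collapses chain homotopy. There is nothing to add.
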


    \section{Adjusting the signs} \label{sec:main}

As reviewed in the previous section, the morphism $\BN(S)$ is defined by composing the morphisms obtained by decomposing $S$ into elementary cobordisms. We adjust the signs of $\BN(S)$ for each elementary cobordism $S$, so that the resulting functor $\BN$ becomes invariant up to chain homotopy, without sign indeterminacy. Note that this adjustment applies to all Khovanov type homologies, since the functor $H_{h,t}(-; R)$ is given by composing $H \circ \mathcal{F}_{h, t}$ with $\BN$. The coming \Cref{lem:adj-rho,lem:adj-phi,lem:cob-no-closed-comp} are mostly proved in \cite[Proposition 2.13, 3.17]{Sano:2020} but we rewrite them for clarity.

\subsection{Adjustments for Reidemeister moves}\label{subsec:adj-reidemeistermoves}

\begin{proposition} \label{lem:adj-rho} \label{prop:variance-under-reidemeister}
    Suppose two link diagrams $D, D'$ are related by one of the Reidemeister moves. Let $S$ be the corresponding elementary cobordism. The sign of $\BN(S)$ can be adjusted so that for any $(R, h, t)$ satisfying $c = \sqrt{h^2 + 4t} \in R^\times$, the corresponding isomorphism 
    \[
        \phi_S: H_{h, t}(D; R) \rightarrow H_{h, t}(D'; R)
    \]
    maps the $\ca$-, $\cb$-classes
    \begin{align*}
        [\ca(D)] &\xmapsto{\phi_S} \hspace{1.5em} c^j [\ca(D')], \\
        [\cb(D)] &\xmapsto{\phi_S} (-c)^j [\cb(D')].
    \end{align*}
    where $j \in \{0, \pm 1\}$ is given by
    \[ 
        j = \frac{\delta w(D, D') - \delta r(D, D')}{2}.
    \]
\end{proposition}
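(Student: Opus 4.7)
The plan is to proceed by case analysis over the oriented variants of the three Reidemeister moves. For each such elementary cobordism $S$, Bar-Natan \cite{BarNatan:2004} prescribes an explicit chain map $\BN(S)$ built from cups, caps and saddles, and \Cref{prop:ab-gen} guarantees that when $c$ is invertible $H_{h,t}(D;R)$ is freely generated by the canonical classes. Hence it suffices to compute $\BN(S)$ on $\ca(D)$ and $\cb(D)$, identify the outputs (modulo coboundaries) with scalar multiples of $\ca(D')$ and $\cb(D')$, and then rescale $\BN(S)$ by $\pm 1$ so that these scalars become exactly $c^j$ and $(-c)^j$ with $j = (\delta w - \delta r)/2$.

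First I would tabulate $j$ in each case: for R1 (positive twist) $\delta w = \delta r = 1$ giving $j=0$; for R1 (negative twist) $\delta w = -1$, $\delta r = 1$ giving $j=-1$; for R2 moves $\delta w = 0$ while $\delta r = \pm 2$ depending on whether Seifert circles are created or destroyed, giving $j = \mp 1$; and for R3 both invariants are preserved so $j = 0$. The target scalars are then pinned down and the Khovanov-side calculation need only match them up to a sign that can be absorbed into the definition of $\BN(S)$.

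The computation localises to the small disk where the move occurs: outside this disk the Seifert circles of $D$ and $D'$ are identified and inherit matching $(\a,\b)$-colorings from \Cref{algo:ab-coloring}. Inside the disk the boundary colors are constrained by the checkerboard rule, so I would enumerate the few admissible local colorings and apply the diagonalised formulas
\[
    m(\a\otimes\a) = c\a,\quad m(\b\otimes\b) = -c\b,\quad \Delta(\a) = \a\otimes\a,\quad \Delta(\b) = \b\otimes\b
\]
together with the $(\a,\b)$-expressions for $\iota$ and $\epsilon$ to read off the coefficient of $\ca(D')$ or $\cb(D')$ in the output. For R1 a single saddle merges the newly created circle with its neighbor, giving a single monomial whose scalar is a power of $\pm c$. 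For R2 and R3 the chain map has several homological components, but only one contributes to the canonical class of $D'$ modulo coboundaries, reducing the problem again to a single tensor-factor multiplication or comultiplication.

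The main obstacle will be the bookkeeping: one needs the raw signs produced by $\BN(S)$ on $\ca(D)$ and on $\cb(D)$ to \emph{agree} so that a single global sign adjustment of $\BN(S)$ corrects both lines of the claimed formula simultaneously. This compatibility is not automatic --- it is exactly where the asymmetry $c^j$ versus $(-c)^j$ plays a role --- and must be checked in each of the finite list of oriented move variants. Once the two signs are seen to coincide case by case, defining the adjusted $\BN(S)$ to equal $\pm\BN(S)$ according to that common sign completes the proof.
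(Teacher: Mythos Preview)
Your overall strategy---case analysis on oriented Reidemeister moves, evaluate the chain map on the canonical cycles via the diagonalised $m,\Delta$, then absorb the resulting sign into $\BN(S)$---is the same route the paper takes, and your observation that the compatibility between the $\ca$- and $\cb$-signs is governed by the relation $\epsilon\epsilon' = (-1)^j$ is exactly right.

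There is, however, a genuine error in your tabulation: it is \emph{not} true that $\delta r = 0$ for every R3 move. The writhe is indeed preserved, but the number of Seifert circles can change by $\pm 2$. This happens precisely in the orientation pattern where the three strand directions along the boundary of the local disk alternate; in that case the oriented resolution on one side of the move contains a small closed circle while the other side does not, and whether this circle is genuinely new or absorbed depends on how the six endpoints are connected \emph{outside} the disk. Consequently $j$ can be $+1$, $0$, or $-1$ for R3, and the analysis does not localise to the small disk in the way you describe: one must enumerate the possible external connection patterns as well. (A similar remark applies to R2: in the parallel-strand variant the Seifert picture is unchanged and $j=0$, which is missing from your list.)

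A second, related subtlety is that Bar-Natan's R3 map is not given as a simple composite of cups, caps and saddles acting on the oriented resolution; it is constructed via a cone argument and has several components, only one of which survives on the canonical classes. The paper handles this by an explicit case analysis over the orientation patterns and external connections, tracking the relevant component of $F$ in each subcase. Your plan needs to incorporate this non-local case split for R3 (and the parallel R2 case) before the sign bookkeeping can be completed.
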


\begin{proof}
    Following \cite[Section 5]{Khovanov:2000}, for any $(R, h, t)$ there is a quasi-isomorphism corresponding to each Reidemeister move 
    \[
        \rho: C_{h, t}(D; R) \rightarrow C_{h, t}(D'; R).
    \]
    In \cite[Proposition 2.13]{Sano:2020} we proved that given $c = \sqrt{h^2 + 4t} \in R^\times$, the induced map $\rho_*$ sends
    \begin{equation}
    \begin{gathered}
        [\ca(D)] \xmapsto{} \epsilon c^j [\ca(D')], \\
        [\cb(D)] \xmapsto{} \epsilon' c^j [\cb(D')].
    \end{gathered}
    \tag{$\star$}
    \label{eq:rho}
    \end{equation}
    Here, $j$ is given as above, and signs $\epsilon, \epsilon' \in \{\pm1\}$ satisfy $\epsilon \epsilon' = (-1)^j$. It is obvious that $\epsilon \rho$ satisfies the properties stated in the proposition. Note that $j$ and the signs $\epsilon, \epsilon'$ are determined solely by $D, D'$, and are independent of $(R, h, t)$. We take one such $(R, h, t)$ satisfying $2 \neq 0 \in R$ (for example $(R, h, t) = (\ZZ, 1, 0)$), and replace $\BN(S)$ with $\epsilon \BN(S)$. Having proved that $\rho_*$ coincides with $\phi_S = H \circ \mathcal{F}_{h, t} \circ \BN(S)$, the proof is done. 
    
    In \cite[Section 4.3]{BarNatan:2004}, the chain homotopy equivalence $F = \BN(S)$ is given explicitly as a linear combination of cobordisms between the formal complexes. For R-I and R-II, one can see that $\mathcal{F}_{h, t} \circ F$ coincides with $\rho$ (the explicit description of $\rho$ is given in \cite[Appendix A]{Sano:2020}). For R-III, $F$ is constructed in a different way and the induced map does not coincide with $\rho$. Nonetheless we can similarly prove the same result for $\phi_S$. Here we briefly outline the proof.
    
    Divide cases by the orientations of the three strands appearing in the move. Fix the orientation of the topmost strand, say, point to the left, and consider the four possible cases for the orientations of the other two strands: $\uparrow\uparrow$, $\downarrow\downarrow$, $\uparrow\downarrow$, $\downarrow\uparrow$ (ordered so that the middle strand comes first). For each case, consider the orientation preserving resolutions for $D$ and $D'$. One sees that for the first three cases, the patterns are isotopic to \Cref{fig:RM3-pattern1}. Using the explicit map $F$ described in \cite[Figure 9]{BarNatan:2004}, one sees that $\eqref{eq:rho}$ holds with $\delta r = 0$. For the forth case (which is the only case where the directions of the strands along the boundary of the disk are alternating), divide into subcases by the way the endpoints are connected outside the disk of the local move (see \Cref{fig:RM3-strands}). For each subcase, the patterns are isotopic to either \Cref{fig:RM3-pattern2} or \Cref{fig:RM3-pattern3} (or its reverse). Again with $F$, one can prove as in \cite[Appendix A]{Sano:2020} that \eqref{eq:rho} holds for each subcase. Note that the coefficient $\pm c^{\pm 1}$ appears only when $\delta r = \mp 2$ (\Cref{fig:RM3-pattern3}).  
\end{proof}

\begin{figure}[p]
    \centering
    \tikzset{every picture/.style={line width=0.75pt}} 

\begin{tikzpicture}[x=0.75pt,y=0.75pt,yscale=-1,xscale=1]

\draw  [dash pattern={on 0.84pt off 2.51pt}] (29,68.3) .. controls (29,49.91) and (43.91,35) .. (62.3,35) .. controls (80.68,35) and (95.59,49.91) .. (95.59,68.3) .. controls (95.59,86.68) and (80.68,101.59) .. (62.3,101.59) .. controls (43.91,101.59) and (29,86.68) .. (29,68.3) -- cycle ;
\draw    (115.57,69.63) -- (152.95,69.63) ;
\draw [shift={(154.95,69.63)}, rotate = 180] [color={rgb, 255:red, 0; green, 0; blue, 0 }  ][line width=0.75]    (10.93,-3.29) .. controls (6.95,-1.4) and (3.31,-0.3) .. (0,0) .. controls (3.31,0.3) and (6.95,1.4) .. (10.93,3.29)   ;
\draw  [dash pattern={on 0.84pt off 2.51pt}] (176.17,68.3) .. controls (176.17,49.91) and (191.08,35) .. (209.47,35) .. controls (227.85,35) and (242.76,49.91) .. (242.76,68.3) .. controls (242.76,86.68) and (227.85,101.59) .. (209.47,101.59) .. controls (191.08,101.59) and (176.17,86.68) .. (176.17,68.3) -- cycle ;
\draw    (49.33,97.62) .. controls (68.35,80.94) and (81.98,64.36) .. (71.27,36.32) ;
\draw  [draw opacity=0][fill={rgb, 255:red, 255; green, 255; blue, 255 }  ,fill opacity=1 ] (54.24,87.82) .. controls (52.7,84.15) and (54.42,79.93) .. (58.08,78.39) .. controls (61.75,76.85) and (65.97,78.57) .. (67.51,82.23) .. controls (69.06,85.9) and (67.34,90.12) .. (63.67,91.66) .. controls (60,93.2) and (55.78,91.48) .. (54.24,87.82) -- cycle ;
\draw  [draw opacity=0][fill={rgb, 255:red, 0; green, 0; blue, 0 }  ,fill opacity=1 ] (58.69,86.18) .. controls (58.2,85) and (58.75,83.65) .. (59.92,83.16) .. controls (61.09,82.67) and (62.44,83.22) .. (62.93,84.39) .. controls (63.42,85.56) and (62.87,86.91) .. (61.7,87.4) .. controls (60.53,87.9) and (59.18,87.35) .. (58.69,86.18) -- cycle ;

\draw    (49.27,37.05) .. controls (44.29,65.03) and (52.19,81.23) .. (75.1,98.43) ;
\draw  [draw opacity=0][fill={rgb, 255:red, 255; green, 255; blue, 255 }  ,fill opacity=1 ] (41.72,62.86) .. controls (38.81,60.15) and (38.64,55.6) .. (41.35,52.69) .. controls (44.06,49.78) and (48.62,49.61) .. (51.53,52.32) .. controls (54.44,55.03) and (54.6,59.59) .. (51.89,62.5) .. controls (49.18,65.41) and (44.63,65.57) .. (41.72,62.86) -- cycle ;
\draw  [draw opacity=0][fill={rgb, 255:red, 0; green, 0; blue, 0 }  ,fill opacity=1 ] (45.1,59.54) .. controls (44.17,58.67) and (44.12,57.22) .. (44.98,56.29) .. controls (45.85,55.36) and (47.3,55.3) .. (48.23,56.17) .. controls (49.17,57.04) and (49.22,58.49) .. (48.35,59.42) .. controls (47.49,60.35) and (46.03,60.4) .. (45.1,59.54) -- cycle ;

\draw  [draw opacity=0][fill={rgb, 255:red, 255; green, 255; blue, 255 }  ,fill opacity=1 ] (70.66,63.11) .. controls (67.75,60.41) and (67.59,55.85) .. (70.3,52.94) .. controls (73.01,50.03) and (77.56,49.86) .. (80.47,52.57) .. controls (83.39,55.28) and (83.55,59.84) .. (80.84,62.75) .. controls (78.13,65.66) and (73.58,65.82) .. (70.66,63.11) -- cycle ;
\draw  [draw opacity=0][fill={rgb, 255:red, 0; green, 0; blue, 0 }  ,fill opacity=1 ] (74.05,59.79) .. controls (73.12,58.92) and (73.06,57.47) .. (73.93,56.54) .. controls (74.8,55.61) and (76.25,55.56) .. (77.18,56.42) .. controls (78.11,57.29) and (78.16,58.74) .. (77.3,59.67) .. controls (76.43,60.6) and (74.98,60.66) .. (74.05,59.79) -- cycle ;

\draw    (29.25,65.61) .. controls (52.76,50.61) and (79.77,53.88) .. (95.87,68.87) ;
\draw    (198.57,98.14) .. controls (196.29,69.77) and (194.87,56.34) .. (219.34,37.24) ;
\draw  [draw opacity=0][fill={rgb, 255:red, 255; green, 255; blue, 255 }  ,fill opacity=1 ] (201.7,51.12) .. controls (200.14,47.46) and (201.85,43.23) .. (205.5,41.68) .. controls (209.16,40.12) and (213.39,41.82) .. (214.95,45.48) .. controls (216.51,49.14) and (214.8,53.37) .. (211.14,54.93) .. controls (207.48,56.48) and (203.25,54.78) .. (201.7,51.12) -- cycle ;
\draw  [draw opacity=0][fill={rgb, 255:red, 0; green, 0; blue, 0 }  ,fill opacity=1 ] (206.14,49.46) .. controls (205.64,48.29) and (206.19,46.94) .. (207.36,46.44) .. controls (208.53,45.94) and (209.88,46.49) .. (210.37,47.66) .. controls (210.87,48.83) and (210.33,50.18) .. (209.16,50.68) .. controls (207.99,51.17) and (206.64,50.63) .. (206.14,49.46) -- cycle ;

\draw    (197.35,38.05) .. controls (219.57,57.14) and (222.57,69.14) .. (221.57,100.14) ;
\draw  [draw opacity=0][fill={rgb, 255:red, 255; green, 255; blue, 255 }  ,fill opacity=1 ] (216.37,81.97) .. controls (213.37,79.36) and (213.05,74.81) .. (215.66,71.81) .. controls (218.28,68.81) and (222.82,68.49) .. (225.82,71.11) .. controls (228.82,73.72) and (229.14,78.26) .. (226.53,81.26) .. controls (223.92,84.26) and (219.37,84.58) .. (216.37,81.97) -- cycle ;
\draw  [draw opacity=0][fill={rgb, 255:red, 0; green, 0; blue, 0 }  ,fill opacity=1 ] (219.64,78.53) .. controls (218.68,77.7) and (218.58,76.24) .. (219.41,75.29) .. controls (220.25,74.33) and (221.7,74.23) .. (222.66,75.06) .. controls (223.62,75.9) and (223.72,77.35) .. (222.88,78.31) .. controls (222.05,79.27) and (220.6,79.37) .. (219.64,78.53) -- cycle ;

\draw  [draw opacity=0][fill={rgb, 255:red, 255; green, 255; blue, 255 }  ,fill opacity=1 ] (192.73,81.38) .. controls (189.73,78.77) and (189.41,74.22) .. (192.02,71.22) .. controls (194.63,68.22) and (199.18,67.91) .. (202.18,70.52) .. controls (205.18,73.13) and (205.5,77.68) .. (202.89,80.68) .. controls (200.27,83.68) and (195.73,83.99) .. (192.73,81.38) -- cycle ;
\draw  [draw opacity=0][fill={rgb, 255:red, 0; green, 0; blue, 0 }  ,fill opacity=1 ] (196,77.95) .. controls (195.04,77.11) and (194.94,75.66) .. (195.77,74.7) .. controls (196.61,73.74) and (198.06,73.64) .. (199.02,74.48) .. controls (199.98,75.31) and (200.08,76.76) .. (199.24,77.72) .. controls (198.41,78.68) and (196.96,78.78) .. (196,77.95) -- cycle ;

\draw    (175.68,68.13) .. controls (196.57,81.14) and (222.57,81.14) .. (242.38,69.18) ;

\end{tikzpicture}
    \caption{R-III move}
    \vspace{3em}
    \includegraphics[scale=0.4]{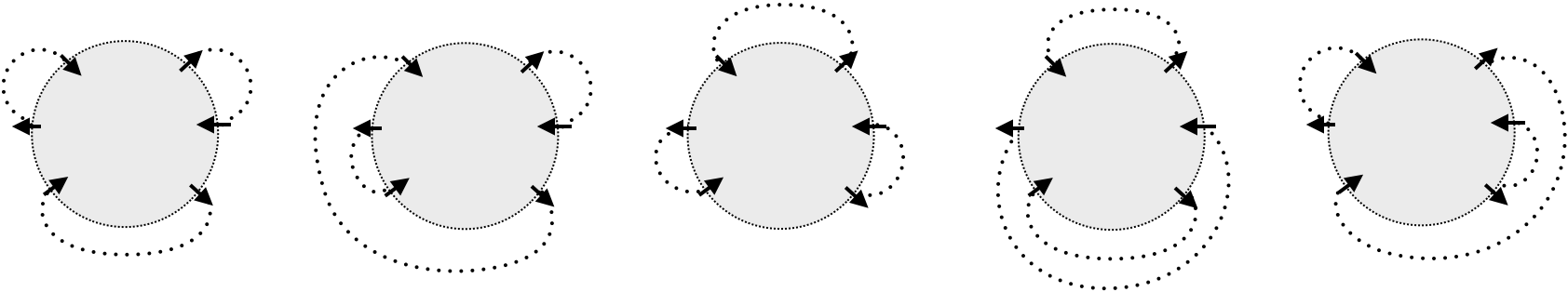}
    \caption{Possible connections of the endpoints}
    \label{fig:RM3-strands}
    \vspace{3em}
    \begin{subfigure}{\textwidth}
        \centering
        \input{tikz/RM3_pattern1}
        \caption{}
        \label{fig:RM3-pattern1}
    \end{subfigure}
    \begin{subfigure}{\textwidth}
        \centering
        \vspace{1.5em}
        \input{tikz/RM3_pattern2}
        \caption{}
        \label{fig:RM3-pattern2}
    \end{subfigure}
    \begin{subfigure}{\textwidth}
        \centering
        \vspace{1.5em}
        \input{tikz/RM3_pattern3}
        \caption{}
        \label{fig:RM3-pattern3}
    \end{subfigure}
    \vspace{1em}
    \caption{Possible patterns of the $\ca$-, $\cb$-cycles of $D, D'$ and their correspondences under $\rho$ (modulo boundary).}
    \label{fig:RM3}
\end{figure}


\begin{remark}
    Plamenevskaya's invariant $\psi(L)$ \cite{Plamenevskaya:2006} of a transverse link $L$ is represented by $\ca(D)$ with $h = t = 0$. The proof of \Cref{lem:adj-rho} also applies when $c = 0$ and $j \geq 0$, so in particular it follows that $\ca(D)$ is strictly invariant under transverse Markov moves. Thus Plamenevskaya's invariant can be refined so that there is no sign indeterminacy. The same applies to the filtered version defined by Lipshitz-Ng-Sarkar in \cite{LNS:2015}.
\end{remark}

\subsection{Adjustments for Morse moves} \label{subsec:adj-morse-moves}

\begin{proposition} \label{lem:adj-phi}
    Suppose two link diagrams $D, D'$ are related by one of the Morse moves. Let $S$ be the corresponding elementary cobordism. The sign of $\BN(S)$ can be adjusted so that for any $(R, h, t)$ satisfying $c = \sqrt{h^2 + 4t} \in R^\times$, the corresponding isomorphism 
    \[
        \phi_S: H_{h, t}(D; R) \rightarrow H_{h, t}(D'; R)
    \]
    maps the $\ca$-, $\cb$-classes
    \begin{align*}
        [\ca(D)] &\xmapsto{\phi_S} \hspace{1.5em} c^j [\ca(D')] + \cdots, \\
        [\cb(D)] &\xmapsto{\phi_S} (-c)^j [\cb(D')] + \cdots
    \end{align*}
    where $j \in \{\pm 1\}$ is given by 
    \[
        j = \frac{-\delta r(D, D') - \chi(S)}{2}.
    \]
    Moreover the $(\cdots)$ terms are only present for a cup move.
\end{proposition}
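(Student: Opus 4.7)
The plan is a case-by-case analysis of the three elementary Morse moves: cup (birth), cap (death), and saddle. In each case the composed map $\mathcal{F}_{h,t}\circ \BN(S)$ reduces to a standard operation on the Frobenius algebra $A_{h,t}$ acting on the tensor factor(s) associated to the affected Seifert circles: the unit $\iota$, the counit $\epsilon$, the multiplication $m$, or the comultiplication $\Delta$, respectively. I then evaluate this operation on $\ca(D)$ and $\cb(D)$, re-expand in the basis of canonical cycles of $D'$, and read off the coefficient.

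For a cup, I use the identity $1 = \tfrac{1}{c}(\a - \b)$ in $A_{h,t}$ to get
\[
    \ca(D)\longmapsto \tfrac{1}{c}\,\ca(D)\otimes \a \;-\; \tfrac{1}{c}\,\ca(D)\otimes \b,
\]
in which exactly one tensor monomial coincides with $\ca(D')$ (determined by the checkerboard color assigned to the newly born circle on $D'$), producing coefficient $\pm c^{-1}$, while the other is an orthogonal canonical cycle that survives as the $(\cdots)$ term. The parallel computation for $\cb$ yields coefficient $\mp c^{-1} = (-c)^{-1}$ on $\cb(D')$, matching $j = -1$. For a cap, the relations $\epsilon(\a) = \epsilon(\b) = 1$ give $\ca(D)\mapsto \ca(D')$ and $\cb(D)\mapsto \cb(D')$ with coefficient $1 = c^{0}$, and no extra terms.

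For a saddle, the cobordism map is $m$ (merge) or $\Delta$ (split). The crucial step, modeled on the saddle case of \cite[Proposition 3.17]{Sano:2020}, is a local check that the checkerboard algorithm is compatible with the saddle: in a merge, the two Seifert circles joined by the saddle carry the \emph{same} color in the $\ca$-cycle, and the merged circle inherits that color; symmetrically for the $\cb$-cycle and for a split. Granted this, the diagonalized formulas $m(\a\otimes \a) = c\a$, $m(\b\otimes \b) = -c\b$, $\Delta(\a) = \a\otimes \a$, $\Delta(\b) = \b\otimes \b$ directly yield coefficients $c^j$ and $(-c)^j$ with $j = 1$ (merge) or $j = 0$ (split). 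In every case, we choose the sign of $\BN(S)$ from $\{\pm 1\}$ so that the coefficient on $\ca(D')$ is exactly $c^j$; the coefficient on $\cb(D')$ is then automatically $(-c)^j$, since the sign ratio between the two is forced to be $(-1)^j$ by the Frobenius formulas above.

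The main obstacle is the color-compatibility check in the saddle case: one must analyze, in a neighborhood of the saddle disk, how the orientation of $D$ and the checkerboard coloring interact, and conclude that the two arcs of the saddle always bound regions of the same black/white color, so that the circles involved receive matching $\ca/\cb$-labels. Once this is established the Frobenius computation finishes the proof; the remaining verification that the adjustment is well-defined (independent of $(R,h,t)$) proceeds as in \Cref{lem:adj-rho}, by pinning down the signs over a ring with $2\neq 0$ and $c$ invertible.
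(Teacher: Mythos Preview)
Your proposal is correct and follows essentially the same approach as the paper: a case split over cup, cap, and saddle, using the identity $1 = c^{-1}(\a - \b)$ for the cup, $\epsilon(\a) = \epsilon(\b) = 1$ for the cap, and the diagonalized Frobenius formulas for the saddle, followed by a global sign choice fixed over a ring with $2 \neq 0$ and $c$ invertible. If anything you are more explicit than the paper about the color-compatibility check at a saddle (which the paper absorbs into the phrase ``depending on whether the saddle merges or splits the Seifert circles''), and you correctly observe that $j = 0$ occurs for caps and splitting saddles, so the range $j \in \{\pm 1\}$ in the statement should really be $j \in \{-1, 0, 1\}$.
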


\begin{proof}
    As in the proof of \Cref{lem:adj-rho}, we take one $(R, h, t)$ satisfying $2 \neq 0 \in R$ and $c = \sqrt{h^2 + 4t} \in R^\times$. Recall that the homomorphism
    \[
        \phi_S: H_{h, t}(D; R) \rightarrow H_{h, t}(D'; R)
    \]
    is given by the operations of the Frobenius algebra $A$, namely, the unit $\iota$ for a cup, the counit $\epsilon$ for a cap, and the mix of the multiplication $m$ and the comultiplication $\Delta$ for a saddle. First for a cup, we have
    \[
        [\ca(D)] \ \xmapsto{\phi_S} \ 
            [\ca(D) \otimes 1] 
            = c^{-1}([\ca(D) \otimes \a] - [\ca(D) \otimes \b])
    \]
    and either one of $[\ca(D) \otimes \a], [\ca(D) \otimes \b]$ is equal to $[\ca(D')]$. For a cap, from $\epsilon(\a) = \epsilon(\b) = 1$, we have
    \[
        [\ca(D)] \xmapsto{\phi_S} [\ca(D')].
    \]
    Finally for a saddle, 
    \[
        [\ca(D)] \xmapsto{\phi_S} \begin{cases}
            \pm c [\ca(D')] \\
            \hspace{1.2em} [\ca(D')]
        \end{cases}
    \]
    depending on whether the saddle merges or splits the Seifert circles of $D$. Thus in either case, the image $\phi_S([\ca(D)])$ contains $[\ca(D')]$ in one of its terms. Redefine $\BN(S)$ by multiplying $\pm 1$ so that the $[\ca(D')]$ term in $\phi_S[\ca(D)]$ is positive. The equation for $j$ can be checked easily.
\end{proof}

\subsection{Adjusted cobordism maps}

\begin{proposition} \label{lem:cob-no-closed-comp}
    Suppose $S$ is a generic cobordism between links $L, L'$ with no closed components. Let $D, D'$ be the diagrams of $L, L'$ respectively. With the sign of $\BN(S)$ adjusted as in \Cref{lem:adj-rho,lem:adj-phi}, for any $(R, h, t)$ satisfying $c = \sqrt{h^2 + 4t} \in R^\times$, the cobordism map
    \[
        \phi_S: H_{h, t}(D; R) \rightarrow H_{h, t}(D'; R)
    \]
    maps the $\ca$-, $\cb$-classes
    \begin{align*}
        [\ca(D)] &\xmapsto{\phi_S} \hspace{1.5em} c^j [\ca(D')] + \cdots, \\
        [\cb(D)] &\xmapsto{\phi_S} (-c)^j [\cb(D')] + \cdots
    \end{align*}
    where
    \[
        j = \frac{\delta w(D, D') - \delta r(D, D') - \chi(S)}{2}.
    \]
    Moreover, if every component of $S$ has a boundary in $L$, then the $(\cdots)$ terms vanish.
\end{proposition}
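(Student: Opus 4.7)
The plan is to decompose $S$ into a finite sequence of elementary cobordisms $S = S_n \circ \cdots \circ S_1$ via a generic Morse function, with intermediate diagrams $D = D_0, D_1, \ldots, D_n = D'$. Each $S_i$ is either a Reidemeister move or a Morse move, so applying \Cref{lem:adj-rho} or \Cref{lem:adj-phi} gives
\[
    \phi_{S_i}([\ca(D_{i-1})]) = c^{j_i}[\ca(D_i)] + E_i,
\]
where $E_i = 0$ unless $S_i$ is a cup. Composing across $i$, the leading term is $c^{j_1 + \cdots + j_n}[\ca(D')]$, and the analogous statement for $[\cb(D)]$ uses $(-c)^{j_i}$ on each factor.

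First I would check that the exponents add up to the stated $j$. The writhe change $\delta w$ and Seifert circle count change $\delta r$ are obviously additive under concatenation, and $\chi(S) = \sum_i \chi(S_i)$ since the gluing is along links, which have Euler characteristic zero. Since $\chi(S_i) = 0$ for Reidemeister moves and $\delta w(D_{i-1},D_i) = 0$ for Morse moves, the formulas of \Cref{lem:adj-rho} and \Cref{lem:adj-phi} uniformly read $j_i = \tfrac{1}{2}(\delta w(D_{i-1},D_i) - \delta r(D_{i-1},D_i) - \chi(S_i))$, and summing gives exactly $\tfrac{1}{2}(\delta w(D,D') - \delta r(D,D') - \chi(S)) = j$.

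The main obstacle is the second assertion. The extra contribution arising at a cup $S_i$ creating a new circle $C_i$ is, by the proof of \Cref{lem:adj-phi}, exactly $-c^{-1}$ times the class one obtains from $[\ca(D_i)]$ by reversing the color of $C_i$. Under the hypothesis that every component of $S$ has boundary in $L$, the component of $S$ containing the cup cannot merely terminate with a cap or pass straight to $D'$; it must eventually meet a saddle in the sequence whose other input is a descendant of some circle of $D_0 = D$. Consider the first such saddle after $S_i$. In the main term, the $\ca$-algorithm assigns matching colors to the two merging circles (otherwise the merge would already kill the main term), whereas in the propagated extra term exactly one of these two circles carries the opposite color, since all intervening moves act locally and cannot repair the color mismatch introduced at the cup. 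Hence this first merging saddle applies either $m(\a \otimes \b) = 0$ or $m(\b \otimes \a) = 0$ to the extra term, annihilating it. Iterating this argument over all cups eliminates every $(\cdots)$ term, and the $[\cb(D)]$ case follows symmetrically by replacing $c$ with $-c$.
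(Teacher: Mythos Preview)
The first half---decomposing $S$ into elementary pieces and summing the exponents $j_i$---is correct and matches the paper's setup; the additivity of $\delta w$, $\delta r$, and $\chi$ is exactly what is needed.

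The second half has a gap. Once you propagate the error term $E_i$ forward through the remaining elementary pieces, every later cup $S_k$ (with $k>i$) bifurcates it into two summands: one in which the new circle $C_k$ carries the correct color and one in which $C_k$ is also reversed. So by the time you reach $D'$ you are not tracking a single ``propagated extra term'' but a linear combination of canonical classes indexed by subsets of the cups. Your ``first merging saddle'' argument only disposes of the summand in which $C_i$ alone is reversed. In a summand where both $C_i$ and $C_k$ are reversed, a saddle merging their descendants does \emph{not} annihilate the term (both inputs carry the opposite color, so $m$ is nonzero), and your phrase ``Iterating this argument over all cups'' does not explain how this surviving branch is eventually killed.

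The clean way to close the gap---and this is essentially what \cite[Proposition~3.17]{Sano:2020}, invoked by the paper, does---is to observe that a summand survives all merges if and only if its pattern of color reversals comes from a global orientation of $S$ agreeing with the given one on $L$. Such orientations are parametrized by the connected components of $S$ that do not meet $L$; under the hypothesis there are none, so only the main term remains.
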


\begin{proof}
    We have proved that the result is true when $S$ is an elementary cobordism. The case when $S$ is a composition of elementary cobordisms proceeds as in the proof of \cite[Proposition 3.17]{Sano:2020}.
\end{proof}
    \section{Proofs of the main theorems}
\label{sec:proof}

Now we are ready to prove the main theorems stated in \Cref{sec:intro}. 

\begin{proof}[Proof of \Cref{thm:main}]
    Suppose $S$ and $S'$ are isotopic link cobordisms between link diagrams $D$ and $D'$ such that the pair $(S, S')$ corresponds to one of the fifteen \textit{movie moves} (or those reverses) of Carter and Saito \cite{CS:1993}. We have corresponding morphisms
    \[
        \BN(S), \BN(S'): \BN(D) \rightarrow \BN(D'),
    \]
    and from \cite[Theorem 4]{BarNatan:2004} we know that $\BN(S) \htpy \epsilon \BN(S')$ for some $\epsilon \in \{\pm1\}$. Our aim is to prove that $\epsilon = 1$, after adjusting the signs of $\BN$ as in \Cref{lem:adj-rho,lem:adj-phi}. Now take $(R, h, t) = (\ZZ, 1, 0)$. Obviously $c = 1$ is invertible and the results of the previous section are applicable.  Applying the functor $H \circ \mathcal{F}_{1, 0}$ gives cobordism maps between the corresponding Khovanov type homologies
    \[
        \phi_S, \phi_{S'}: H_{1, 0}(D; \ZZ) \rightarrow H_{1, 0}(D'; \ZZ)
    \]
    and we have $\phi_S = \epsilon \phi_{S'}$. From the description of the movie moves, one sees that both $S$ and $S'$ have no closed components. Thus from \Cref{lem:cob-no-closed-comp}, the images of $[\ca(D)] \in H_{h, t}(D; R)$ under $\phi_S$ and $\phi_{S'}$ are both of the form $[\ca(D')] + \cdots$. Since the canonical classes form bases of the homologies, we must have $\epsilon = 1$.
\end{proof}

\begin{proof}[Proof of \Cref{thm:closed-surf-inv}]
    Consider ring extensions
    \[
        R_0 = \ZZ[h, t]
        \ \hookrightarrow \ 
        R_1 = R_0[u, v]
        \ \hookrightarrow \ 
        R_2 = R_1[c^{-1}]
    \]
    where $u, v$ are roots of $X^2 - hX - t \in R_0[X]$ and $c = v - u$. Regarding $S$ as a cobordism between empty links, we obtain the following commutative diagram.
    \begin{figure}[H]
        \centering
\begin{tikzcd}
R_0 = \mathit{Kh}(\emptyset; R_0) \arrow[d, hook] \arrow[rr, "\phi_S"] &  & \mathit{Kh}(\emptyset; R_0) = R_0 \arrow[d, hook] \\
R_2 = \mathit{Kh}(\emptyset; R_2) \arrow[rr, "\phi_S"]                     &  & \mathit{Kh}(\emptyset; R_2) = R_2                  
\end{tikzcd}
    \end{figure}
    \noindent
    Thus it suffices to prove the equation for $R = R_2$. Isotope $S$ so that $S$ decomposes as $\text{(cup)} \cup S' \cup \text{(cap)}$, and that the projection of the two boundary circles of $S'$ are both crossingless unknot diagrams $U, U'$. From \Cref{lem:cob-no-closed-comp}, 
    \begin{align*}
        [\ca(U)] &\xmapsto{\phi_{S'}} \hspace{1.5em} c^j [\ca(U')], \\
        [\cb(U)] &\xmapsto{\phi_{S'}} (-c)^j [\cb(U')]
    \end{align*}
    where $j = -\chi(S') / 2 = g(S)$. The cup map sends $1 \in R$ to $c^{-1}([\ca(U)] - [\cb(U)])$, and the cap map sends both $[\ca(U')], [\cb(U')]$ to $1 \in R$. Thus $\phi_S$ maps
    \begin{align*}
        1 &\xmapsto{\phi_\cup} c^{-1}([\ca(U)] - [\cb(U)]) \\
          &\xmapsto{\phi_{S'}} c^{j-1} ([\ca(U')] + (-1)^{j+1} [\cb(U')]) \\
          &\xmapsto{\phi_\cap} c^{j-1} (1 + (-1)^{j+1}).
    \end{align*}
    Substituting $h^2 + 4t$ for $c^2$ gives the desired equation.
\end{proof}

\begin{remark}
The value of the Khovanov-Jacobsson number $n_S = |H_{0, 0}(S)|$ is determined by Rasmussen in \cite{Rasmussen:2005}, and independently by Tanaka \cite{Tanaka:2006} by the specialization $n_S = |H_{0, t}(S)|_{t = 0}$. Caprau \cite{Cap:2009} defined an invariant $\mathit{Inv}(S) \in \ZZ[i][h, t]$ using her universal $\mathfrak{sl}(2)$ cohomology.
\end{remark}


    \section*{Acknowledgements}

The author is grateful to his supervisor Mikio Furuta for the support. He thanks Tomohiro Asano and Kouki Sato for helpful suggestions, and the anonymous referee for detailed corrections and suggestions. He thanks members of his \textit{academist fanclub}\footnote{\url{https://taketo1024.jp/supporters}} for the support. This work was supported by JSPS KAKENHI Grant Number 20J15094. 
    \printbibliography

\end{document}